\documentclass[12pt]{article}
\usepackage[margin=1.3in]{geometry}
\usepackage{amsthm,fullpage}
\usepackage{amsmath}
\usepackage{amssymb}
\usepackage{mathtools}
\usepackage{float}
\usepackage{algorithm}
\usepackage[noend]{algpseudocode}
\usepackage[toc,page]{appendix}
\usepackage{graphicx}
\usepackage[hidelinks]{hyperref}
\usepackage[pdftex,dvipsnames,table]{xcolor}
\usepackage[export]{adjustbox} 
\usepackage{subfiles}
\usepackage{enumerate}
\usepackage{subcaption}

\usepackage{makecell}

\theoremstyle{plain}
\numberwithin{equation}{section}

\newtheorem{theorem}{Theorem}[section]
\newtheorem{corollary}[theorem]{Corollary}
\newtheorem{lemma}[theorem]{Lemma}
\newtheorem{conjecture}[theorem]{Conjecture}

\newtheorem{remark}{Remark}[section]


\hypersetup{
	colorlinks=True,
	linkcolor= blue,
 	citecolor=blue
 }
\definecolor{ag}{rgb}{0.55, 0.71, 0.0}

\DeclarePairedDelimiterX{\inprod}[2]{\langle}{\rangle}{#1, #2}

\usepackage{xargs}
\usepackage[colorinlistoftodos,prependcaption,textsize=tiny]{todonotes}
\newcommandx{\unsure}[2][1=]{\todo[linecolor=red,backgroundcolor=red!25,bordercolor=red,#1]{#2}}
\newcommandx{\change}[2][1=]{\todo[linecolor=blue,backgroundcolor=blue!25,bordercolor=blue,#1]{#2}}
\newcommandx{\info}[2][1=]{\todo[linecolor=OliveGreen,backgroundcolor=OliveGreen!25,bordercolor=OliveGreen,#1]{#2}}
\newcommandx{\improvement}[2][1=]{\todo[linecolor=Plum,backgroundcolor=Plum!25,bordercolor=Plum,#1]{#2}}

\setlength\parindent{0pt}
\setlength{\parskip}{6pt}

\title{Bollob\'{a}s - Nikiforov Conjecture for graphs with not so many triangles}
\author{Hitesh Kumar, Shivaramakrishna Pragada}

\date{\today}

\begin{document}

\maketitle

\begin{abstract}
 Bollob\'{a}s and Nikiforov \cite{bollobas2007cliques} conjectured that for any graph $G \neq K_n$ with $m$ edges
 \[ \lambda_1^2+\lambda_2^2\le \bigg( 1-\frac{1}{\omega(G)}\bigg)2m\]
 where $\lambda_1$ and $\lambda_2$ denote the two largest eigenvalues of the adjacency matrix $A(G)$, and $\omega(G)$ denotes the clique number of $G$. This conjecture was recently verified for triangle-free graphs \cite{lineigenvalues} and regular graphs \cite{zhang2024regular}. In \cite{elphick2024square_sum_conjec}, a generalization of this conjecture was proposed. In this note, we verify this generalized conjecture for the family of graphs on $m$ edges, which contain at most $O(m^{1.5-\varepsilon})$ triangles for some $\varepsilon > 0$. In particular, we show that the conjecture is true for planar graphs, book-free graphs and cycle-free graphs.
\end{abstract}

\noindent
\textbf{Keywords:} Bollob\'{a}s-Nikiforov Conjecture, adjacency matrix, spectral radius, second largest eigenvalue, clique number.

\noindent
\textbf{MSC:} 05C50

\begin{center}
     \emph{In memory of Prof. Vladimir Nikiforov.}
 \end{center}

\section{Introduction}
For a simple graph $G$ on $n$ vertices and $m$ edges, let $A(G)$ denote its \emph{adjacency matrix}. We denote the \emph{eigenvalues} of $A(G)$ by 
\[\lambda_1\geq \lambda_2\geq \cdots \geq \lambda_{n-1}\geq \lambda_n.\] Let $(n^+, n^0, n^-)$ denote the \emph{inertia} of $G$, i.e. $G$ has $n^+$ positive eigenvalues, eigenvalue $0$ with multiplicity $n^0$, and $n^-$ negative eigenvalues. We denote the \emph{clique number} of $G$ by $\omega(G)$. Let $t(G)$ denote the number of triangles in $G$. Let $s_k(G)=\sum_{i=1}^k \lambda_i^2$, i.e. $s_k(G)$ is the square sum of first $k$ eigenvalues of $G$.  For a graph $G$ define \[\Lambda_k(G)= \frac{s_k(G)}{m}.\] 

In 2002, Nikiforov \cite{nikiforov2002spectralturan} showed the following result, which implies the classical Tur\'{a}n's Theorem on cliques.
\begin{theorem}
For every graph $G$, we have 
    \[\Lambda_1(G)  \leq 2\bigg(1 - \frac{1}{\omega(G)}\bigg).\]
\end{theorem}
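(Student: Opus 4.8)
The plan is to deduce the bound from the Motzkin--Straus theorem combined with one application of the Cauchy--Schwarz inequality to the Perron eigenvector. Recall that the Motzkin--Straus theorem states that for every graph $G$,
\[
\max\Bigl\{ \textstyle\sum_{ij \in E(G)} w_i w_j \ :\ w_i \ge 0,\ \textstyle\sum_i w_i = 1 \Bigr\} = \frac{1}{2}\Bigl(1 - \frac{1}{\omega(G)}\Bigr),
\]
the maximum being attained at the uniform weighting of a maximum clique. Since this is precisely the shape of the right-hand side we are aiming for (up to the factor $2$), the task is to massage $\lambda_1^2/m$ into a quantity to which this identity applies.

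First I would invoke the Perron--Frobenius theorem to fix a nonnegative unit eigenvector $x$ for $\lambda_1$, so that $Ax=\lambda_1 x$ and $\|x\|_2=1$. Then $\lambda_1 = x^{\top}Ax = 2\sum_{ij\in E(G)} x_i x_j$, where the sum runs over the unordered edges of $G$. Squaring and applying Cauchy--Schwarz to this $m$-term sum (pairing each term $x_ix_j$ with $1$) gives
\[
\lambda_1^2 = 4\Bigl(\sum_{ij\in E(G)} x_i x_j\Bigr)^{2} \le 4m \sum_{ij\in E(G)} x_i^2 x_j^2 .
\]
Next I would set $w_i = x_i^2$; since $x$ is nonnegative with $\|x\|_2=1$, the vector $w$ lies in the standard simplex, so the Motzkin--Straus theorem yields $\sum_{ij\in E(G)} x_i^2 x_j^2 = \sum_{ij\in E(G)} w_i w_j \le \tfrac12\bigl(1-1/\omega(G)\bigr)$. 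Substituting this into the displayed bound gives $\lambda_1^2 \le 2m\bigl(1-1/\omega(G)\bigr)$, i.e.\ $\Lambda_1(G)\le 2(1-1/\omega(G))$, as claimed. (If $G$ is disconnected one simply notes that the support of $x$ lies in one component whose clique number is at most $\omega(G)$; in fact the argument above is valid verbatim for arbitrary $G$, since both Cauchy--Schwarz and Motzkin--Straus were applied to the entire edge set.)

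There is no real obstacle: the proof is short once one sees that Cauchy--Schwarz is the device that converts the \emph{linear} form $x^{\top}Ax$ into the \emph{quadratic} form $\sum_{ij} x_i^2 x_j^2$, for which Motzkin--Straus is tailor-made. The only points needing a little care are bookkeeping: because the Perron vector is $\ell_2$-normalized, it is the squares $x_i^2$ (not the $x_i$) that form a probability vector, and the factor $4$ arising from $(2\sum_{ij} x_ix_j)^2$ must be tracked so that it cancels against the $\tfrac12$ in Motzkin--Straus. Finally, the claimed implication of Tur\'an's theorem follows by combining the result with the Rayleigh-quotient bound $\lambda_1\ge 2m/n$, which gives $4m^2/n^2 \le 2m\bigl(1-1/\omega(G)\bigr)$ and hence $m \le \bigl(1-1/\omega(G)\bigr)n^2/2$.
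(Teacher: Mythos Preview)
Your argument is correct: Cauchy--Schwarz applied to the edge-sum $\lambda_1 = 2\sum_{ij\in E} x_ix_j$ followed by Motzkin--Straus on the probability vector $w_i = x_i^2$ gives $\lambda_1^2 \le 2m(1-1/\omega(G))$ exactly as you wrote, and your bookkeeping of the factors of $2$ and $4$ is right. The parenthetical about disconnected $G$ is also fine, and the closing derivation of Tur\'an via $\lambda_1 \ge 2m/n$ is standard.

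There is, however, no comparison to make with the paper: this theorem is quoted there as Nikiforov's 2002 result and is \emph{not} proved in the paper itself---it serves only as background motivation for the Bollob\'as--Nikiforov conjecture. For the record, the proof you have written is essentially Nikiforov's original one.
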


Later, in 2007,  Bollob\'{a}s and Nikiforov \cite{bollobas2007cliques} proposed that the following stronger conjecture might be true. 
\begin{conjecture}[Bollob\'{a}s-Nikiforov Conjecture]\label{conj:BN}
For every graph $G \neq K_{n}$, we have 
\[\Lambda_2(G) \leq 2\bigg(1 - \frac{1}{\omega(G)}\bigg).\]
\end{conjecture}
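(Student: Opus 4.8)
The plan is to prove the equivalent \emph{tail} form of the inequality. Since $\Tr(A^2)=\sum_i \lambda_i^2 = 2m$, the claim $\lambda_1^2+\lambda_2^2 \le 2\bigl(1-\tfrac{1}{\omega(G)}\bigr)m$ is equivalent to the lower bound
\[ \sum_{i=3}^{n}\lambda_i^2 \;\ge\; \frac{2m}{\omega(G)}. \]
This reformulation is convenient for two reasons: it exposes the trivial bound $\lambda_1^2+\lambda_2^2\le 2m$, so that the entire content is the ``savings'' of $2m/\omega$, and it isolates the lower part of the spectrum, where the two solved regimes behave differently. I would first re-read the triangle-free proof of \cite{lineigenvalues} and the regular proof of \cite{zhang2024regular} through this lens, aiming to extract a single inequality specialising to both: the triangle-free case forces $\sum_i\lambda_i^3 = 6t(G)=0$, while the regular case pins down $\lambda_1$ and its Perron eigenvector exactly, and a unified argument must interpolate between this moment constraint and this structural constraint.

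The main strategy is to bound $\lambda_1^2+\lambda_2^2$ through the splitting $\lambda_1^2+\lambda_2^2=(\lambda_1+\lambda_2)^2-2\lambda_1\lambda_2$ and to control each piece by the clique number. For $\lambda_1+\lambda_2$ one has the Ky Fan variational identity $\lambda_1+\lambda_2=\max\Tr(U^\top A U)$ over $n\times 2$ matrices $U$ with orthonormal columns, which---crucially---does capture the two \emph{algebraically} largest eigenvalues. The idea is to feed into this identity a nonnegative test frame built from a maximum clique and to apply the Motzkin--Straus identity $\max_{x\in\Delta}x^\top A x = 1-\tfrac{1}{\omega(G)}$ (on the standard simplex $\Delta$) eigenspace-wise, converting the clique number into an upper bound on $(\lambda_1+\lambda_2)^2$. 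In parallel $\lambda_1\lambda_2$ must be bounded \emph{below}, using $\sum_i\lambda_i=0$ together with $\sum_i\lambda_i^2=2m$ and $\sum_i\lambda_i^3=6t(G)$: the third moment ties $\lambda_1\lambda_2$ to the triangle count, and hence, in the dense regime, back to the presence of cliques. Concretely, the steps would be (1) establish a two-dimensional Motzkin--Straus bound on $\lambda_1+\lambda_2$; (2) derive a matching lower bound on $\lambda_1\lambda_2$ from the first three spectral moments; (3) combine them, optimising over the free parameters to reach $2\bigl(1-\tfrac{1}{\omega}\bigr)m$; and (4) check that both known endpoints are recovered, certifying that no slack was lost.

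The hard part---and the reason the conjecture remains open---is step (1), or more precisely the genuine appearance of $\lambda_2$. The eigenvector associated with $\lambda_2$ is not sign-definite, so Motzkin--Straus, which lives on the nonnegative simplex, does not apply to it directly, and the clean Perron-vector argument behind Nikiforov's inequality $\lambda_1^2\le 2\bigl(1-\tfrac{1}{\omega(G)}\bigr)m$ has no second-eigenvector analogue. Worse, $\lambda_2^2$ is the second-largest value among $\{\lambda_i^2\}$ only when $|\lambda_2|\ge|\lambda_n|$; when the negative part dominates, $\Tr(A^2 P_U)$ over two-dimensional $U$ is maximised by $\mathrm{span}(v_1,v_n)$ rather than $\mathrm{span}(v_1,v_2)$, so no single variational principle delivers $\lambda_1^2+\lambda_2^2$ as a maximum. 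Any complete proof must therefore control the \emph{algebraically} second eigenvalue uniformly in $\omega(G)$ while simultaneously coupling $t(G)$ to $\omega(G)$ in the triangle-dense regime; surmounting that coupling---not the moment bookkeeping---is where I expect essentially all of the difficulty to lie, and it is precisely the obstacle that the triangle-sparse hypothesis adopted in this note is designed to sidestep.
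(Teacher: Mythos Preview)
The statement you were asked to prove is Conjecture~\ref{conj:BN}, the full Bollob\'{a}s--Nikiforov Conjecture, and the paper does \emph{not} prove it: it remains open. The paper establishes it only for the restricted family $\mathcal{G}_\omega(\varepsilon,c)$ of graphs with $t(G)=O(m^{1.5-\varepsilon})$ triangles, via Theorem~\ref{thm:main1} and Theorem~\ref{thm:triangle_bound}. So there is no ``paper's own proof'' of this statement to compare against.

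Your writeup is not a proof either, and you say so yourself: you lay out a four-step programme (two-dimensional Motzkin--Straus for $\lambda_1+\lambda_2$, a moment lower bound on $\lambda_1\lambda_2$, combine, check endpoints) and then explain why step~(1) is the genuine obstruction, because the $\lambda_2$-eigenvector is sign-indefinite and Motzkin--Straus lives on the nonnegative simplex. That diagnosis is accurate and matches the consensus view of where the difficulty sits. What you have written is a research proposal, not a proof attempt, and as such it cannot be assessed for correctness---only for plausibility, and it is plausible.

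For contrast, the paper's partial argument bypasses your step~(1) entirely. Rather than trying to control $\lambda_1+\lambda_2$ or $\lambda_1\lambda_2$ separately via clique structure, it works purely with the power sums $\sum\lambda_i^2=2m$ and $\sum\lambda_i^3=6t(G)$: Lemma~\ref{lemma:triangle} (H\"older on the positive and negative eigenvalue blocks) shows that a large value of $s_\ell=\sum_{i\le\ell}\lambda_i^2$ forces $t(G)=\Omega(m^{3/2})$, which contradicts the triangle-sparse hypothesis. No Motzkin--Straus, no Ky Fan, no variational test frames---the clique number enters only through the trivial bound $\ell\le\omega$. This is exactly the ``sidestep'' you mention in your final sentence: the moment bookkeeping you describe in step~(2) is essentially all the paper needs once many triangles are ruled out, and your step~(1) is simply dropped rather than solved.
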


The above conjecture has been verified for triangle-free graphs by Lin, Ning and Wu \cite{lineigenvalues} and for regular graphs by Zhang \cite{zhang2024regular}. Elphick, Linz and Wocjan \cite{elphick2024square_sum_conjec}, after some computational investigation, suggested that the following generalization might be true. 
\begin{conjecture}\label{conj:BN_general}
For every graph $G$,  
\[\Lambda_\ell (G) \leq 2\bigg(1 - \frac{1}{\omega(G)}\bigg)\]
where $\ell=\min\{n^+, \omega(G)\}$.
\end{conjecture}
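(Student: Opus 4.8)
The plan is to reduce Conjecture~\ref{conj:BN_general} to controlling the excess of the top eigenvalues over what Nikiforov's theorem already guarantees, splitting the analysis according to which term realizes $\ell=\min\{n^+,\omega(G)\}$. Writing $s_\ell(G)=\lambda_1^2+\sum_{i=2}^{\ell}\lambda_i^2$ and using the spectral Tur\'an bound $\lambda_1^2\le 2m(1-1/\omega(G))$, it suffices to absorb the tail $\sum_{i=2}^{\ell}\lambda_i^2$ into the slack of that inequality. When $n^+\le\omega(G)$ we have $\ell=n^+$, and since $\Tr(A(G)^2)=\sum_i\lambda_i^2=2m$ the goal becomes the clean inequality $\sum_{i:\lambda_i<0}\lambda_i^2\ge 2m/\omega(G)$; when $n^+>\omega(G)$ we have $\ell=\omega(G)$ and the sum is genuinely truncated, so the cap at $\omega(G)$ must be used. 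I would stress at the outset that this truncation is essential and not a convenience: the squares of all positive eigenvalues can sum to more than $2m(1-1/\omega(G))$---for instance $C_7$ has $n^+=3$ with $\sum_{i=1}^{3}\lambda_i^2>m$, whereas keeping only $\ell=2$ terms restores the inequality---so any valid proof must exploit the cap.

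For the regime of few triangles I would run a moment-based argument, which is the part I expect to go through and which recovers the quantitative theorem of this note. The identities $\Tr(A(G)^2)=2m$ and $\Tr(A(G)^3)=6t(G)$ fix the first three power sums of the spectrum; combining these with spectral Tur\'an and Nosal-type estimates and a power-mean interpolation of the positive eigenvalues, one obtains a bound on $s_\ell(G)$ that stays below $2m(1-1/\omega(G))$ with room to spare as long as $t(G)=O(m^{3/2-\varepsilon})$, while at the triangle-free endpoint the Lin--Ning--Wu inequality $\lambda_1^2+\lambda_2^2\le m$ supplies the case $t(G)=0$. The stated special cases---forests, planar graphs, and book-free graphs---then follow from their triangle counts.

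The main obstacle is the dense regime $t(G)=\Theta(m^{3/2})$, which is exactly where the extremal examples sit. Equality in Conjecture~\ref{conj:BN_general} is attained by disjoint unions of cliques and of complete multipartite graphs: two disjoint copies of $K_r$ have $\ell=2$ and $\lambda_1^2+\lambda_2^2=2(r-1)^2=2m(1-1/\omega(G))$, and here $t(G)=\Theta(m^{3/2})$. At this scale the trace estimates above are no longer lossy enough to spare the tail, so a purely moment-based bound cannot close the gap. Pushing further would require a genuine dichotomy: either $t(G)$ is small and the moment argument applies, or a supersaturation / Kruskal--Katona input forces $\omega(G)$ large enough that the right-hand side $2m(1-1/\omega(G))$ itself grows to accommodate the extra positive eigenvalues. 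The hard part is making these two mechanisms quantitatively compatible precisely at the boundary, backed by a stability statement showing that near-equality forces $G$ to be close to a disjoint union of complete multipartite graphs; it is this tight boundary behavior, rather than either regime in isolation, that keeps both this conjecture and the original Bollob\'as--Nikiforov conjecture open.
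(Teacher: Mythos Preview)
The statement you are addressing is stated in the paper as an open conjecture, not a theorem; the paper does not prove it in general. What the paper does prove is Theorem~\ref{thm:main1}, which verifies the conjecture for graphs with $t(G)\le c\,m^{3/2-\varepsilon}$ and sufficiently many edges, and it explicitly concludes that the remaining bottleneck is the regime $t(G)=\Omega(m^{3/2})$.

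Your proposal is not a proof either, and you say so yourself. The part you claim goes through---the moment argument for the few-triangles regime using $\Tr(A^2)=2m$, $\Tr(A^3)=6t(G)$, and a power-mean comparison among the positive eigenvalues---is essentially the paper's Lemma~\ref{lemma:triangle} and the contradiction argument in the proof of Theorem~\ref{thm:main1}. Your diagnosis that the dense regime $t(G)=\Theta(m^{3/2})$ is where the argument stalls, with near-extremal examples like disjoint unions of cliques, matches the paper's concluding sentence. Your $C_7$ observation that the cap $\ell=\min\{n^+,\omega\}$ is genuinely needed is correct and a useful sanity check not spelled out in the paper.

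So there is no error to flag, but also no proof to compare: both you and the paper handle the sparse-triangle case by the same trace/power-mean method and leave the full conjecture open. The dichotomy you sketch for the dense case (supersaturation forcing large $\omega$ versus moment control) is a reasonable heuristic but, as you acknowledge, does not currently close.
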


Elphick, Linz and Wocjan \cite{elphick2024square_sum_conjec} verified the conjecture for some graph families, including weakly perfect graphs and Kneser graphs. Also, see the recent survey by Liu and Ning \cite{liu_unsolved_2023} for progress on these and related conjectures.

In this note, we verify the above conjectures in a strong way for graphs with not so many triangles. To make our claims precise, we introduce some notation. For positive constants $\varepsilon$ and $c$ let $\mathcal{G}(\varepsilon, c)$ denote the family of graphs $G$ with $t(G)\leq c m^{1.5-\varepsilon}$. Let $\mathcal{G}_\omega(\varepsilon, c)$ denote the subfamily of graphs in $\mathcal{G}(\varepsilon, c)$ whose clique number equals fixed constant $\omega$. Now, we can state our main result.

\begin{theorem}\label{thm:main1}
Let $\omega \ge 3$. Let $G \in \mathcal{G}_\omega(\varepsilon, c)$ with $m$ edges and let $k\geq 1$. If $m \geq \big(2.2c\omega^{2k}\big)^{1/\varepsilon}$ then 
\[\Lambda_\ell(G) < 2\bigg(\frac{\sqrt[3]{\omega}}{1+ \sqrt[3]{\omega}} + \frac{1}{\omega^{k}}\bigg)\]
where $\ell = \min\{n^+,\omega\}$.
\end{theorem}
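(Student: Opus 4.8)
The plan is to establish one clean inequality relating $s_\ell(G)$, $m$, $t(G)$ and $\omega$, and then to analyse it. Throughout, the two basic trace identities $\sum_{i}\lambda_i^2=\Tr A(G)^2=2m$ and $\sum_i\lambda_i^3=\Tr A(G)^3=6t(G)$ are used. Since $\ell=\min\{n^+,\omega\}\le n^+$, the eigenvalues $\lambda_1,\dots,\lambda_\ell$ are positive, and for each $i>\ell$ we have $\lambda_i^2\le\sum_{j>\ell}\lambda_j^2=2m-s_\ell(G)$, hence $|\lambda_i|\le\sqrt{2m-s_\ell(G)}$. Splitting the cube-sum at index $\ell$, bounding the tail by $\sum_{i>\ell}\lambda_i^3\ge-\sqrt{2m-s_\ell(G)}\sum_{i>\ell}\lambda_i^2$, and applying Jensen's inequality to the convex map $x\mapsto x^{3/2}$ on $\lambda_1^2,\dots,\lambda_\ell^2$ to get $\sum_{i\le\ell}\lambda_i^3\ge\ell\,(s_\ell(G)/\ell)^{3/2}=s_\ell(G)^{3/2}/\sqrt\ell\ge s_\ell(G)^{3/2}/\sqrt\omega$, one obtains the key estimate
\[
\frac{s_\ell(G)^{3/2}}{\sqrt\omega}\ \le\ 6t(G)+\bigl(2m-s_\ell(G)\bigr)^{3/2}.
\]

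Next, normalise by setting $\sigma:=s_\ell(G)/(2m)=\Lambda_\ell(G)/2\in[0,1]$. Dividing the key estimate by $(2m)^{3/2}$ and using $t(G)\le cm^{1.5-\varepsilon}$ together with $m\ge(2.2c\omega^{2k})^{1/\varepsilon}$ gives $6t(G)/(2m)^{3/2}\le \tfrac{3c}{\sqrt2}m^{-\varepsilon}\le\tfrac{3}{2.2\sqrt2}\,\omega^{-2k}<\omega^{-2k}$ (the constant $2.2$ is chosen precisely so that this holds), hence
\[
\varphi(\sigma):=\frac{\sigma^{3/2}}{\sqrt\omega}-(1-\sigma)^{3/2}\ \le\ \frac1{\omega^{2k}}.
\]
The function $\varphi$ is strictly increasing on $[0,1]$, and $\varphi(\sigma)=0$ is equivalent to $\sigma^3/\omega=(1-\sigma)^3$, i.e. to $\sigma=\sqrt[3]{\omega}\,(1-\sigma)$, whose unique root is $\sigma^\ast:=\sqrt[3]{\omega}/(1+\sqrt[3]{\omega})$. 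Therefore it suffices to prove $\varphi(\sigma^\ast+\omega^{-k})>\omega^{-2k}$: combined with $\varphi(\sigma)\le\omega^{-2k}$ and monotonicity this forces $\sigma<\sigma^\ast+\omega^{-k}$, i.e. $\Lambda_\ell(G)=2\sigma<2\bigl(\tfrac{\sqrt[3]{\omega}}{1+\sqrt[3]{\omega}}+\tfrac1{\omega^k}\bigr)$.

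To verify $\varphi(\sigma^\ast+\omega^{-k})>\omega^{-2k}$, write $\delta:=\omega^{-k}$ and use $1-\sigma^\ast=(1+\sqrt[3]{\omega})^{-1}$ and $(\sigma^\ast)^{3/2}/\sqrt\omega=(1-\sigma^\ast)^{3/2}$. Since $\delta/(1-\sigma^\ast)=\omega^{-k}(1+\sqrt[3]{\omega})\le1$ for $\omega\ge3$ (as $\omega^k\ge\omega>1+\sqrt[3]{\omega}$), the elementary estimates $(1+x)^{3/2}\ge1+\tfrac32x$ ($x\ge0$) and $(1-y)^{3/2}\le1-y$ ($0\le y\le1$) yield
\[
\varphi(\sigma^\ast+\delta)\ \ge\ (1-\sigma^\ast)^{3/2}\Bigl(1+\tfrac{3\delta}{2\sigma^\ast}\Bigr)-(1-\sigma^\ast)^{3/2}\Bigl(1-\tfrac{\delta}{1-\sigma^\ast}\Bigr)\ \ge\ (1-\sigma^\ast)^{1/2}\delta\ =\ \frac{\omega^{-k}}{\sqrt{1+\sqrt[3]{\omega}}};
\]
and because $\omega^{2k}\ge\omega^2>1+\sqrt[3]{\omega}$ for all $\omega\ge3$ and $k\ge1$, the right-hand side exceeds $\omega^{-2k}$, which completes the proof.

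The crux is the key estimate; once it is in hand, everything else is bookkeeping, and in the triangle-free case ($t(G)=0$) it already gives $\Lambda_\ell(G)\le2\sqrt[3]{\omega}/(1+\sqrt[3]{\omega})$. The two points that require care are: (i) the index split, where one must check that $\lambda_1,\dots,\lambda_\ell$ are positive (automatic from $\ell\le n^+$) and that $|\lambda_i|\le\sqrt{2m-s_\ell(G)}$ for every $i>\ell$ (which only needs $n>\ell$, itself automatic since a graph with an edge has a negative eigenvalue); and (ii) the final calculus step $\varphi(\sigma^\ast+\omega^{-k})>\omega^{-2k}$, where the linearisations of $(1\pm x)^{3/2}$ must be arranged so that the $\omega^{-2k}$-slack coming from the triangle bound is genuinely absorbed uniformly over $\omega\ge3$ and $k\ge1$.
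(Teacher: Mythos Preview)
Your proof is correct and follows essentially the same route as the paper. Your key estimate is exactly the paper's Lemma~2.2 (derived there via the power-mean/H\"older inequality, which is the same Jensen step you use for the positive block, together with the analogous bound on the negative block), and your subsequent analysis of the normalized function $\varphi(\sigma)=\sigma^{3/2}/\sqrt{\omega}-(1-\sigma)^{3/2}$ is the paper's contradiction argument rephrased via monotonicity; the only cosmetic difference is that the paper bounds $\varphi(\sigma^\ast+\omega^{-k})$ using superadditivity of $x\mapsto x^{3/2}$ (obtaining $\varphi>\omega^{-1.5k-0.5}$), whereas you use the Bernoulli-type linearisations $(1+x)^{3/2}\ge 1+\tfrac32 x$ and $(1-y)^{3/2}\le 1-y$ (obtaining the slightly sharper $\varphi>\omega^{-k}/\sqrt{1+\sqrt[3]{\omega}}$).
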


Note that taking $k=3$ in the above theorem implies Conjecture \ref{conj:BN_general} for graphs in $\mathcal{G}_\omega(\varepsilon, c)$ with sufficiently many edges. We emphasize that our bound outperforms the conjectured bound for any fixed $\omega$ and large $m$. In other words, there is no hope for equality in Conjecture \ref{conj:BN_general} for graphs with not so many triangles. 

Since there are many interesting graph classes with not so many triangles, our result expands the set of all known graph classes for which Conjecture \ref{conj:BN_general} is true. In particular, it is true for planar graphs, $B_k$-free graphs, and $C_k$-free graphs where $B_k$ denotes the book on $k+2$ vertices and $C_k$ denotes the $k$-cycle. Recall that a graph $G$ is said to be $H$-free if $G$ does not contain $H$ as a subgraph. 

\begin{figure}[H]
\begin{subfigure}{0.6\textwidth}
    \centering
\begin{tikzpicture}[scale=0.85]
\draw  (0,1)-- (0,-1);
\draw  (2,0)-- (0,1);
\draw  (2,0)-- (0,-1);
\draw  (3,0)-- (0,1);
\draw  (3,0)-- (0,-1);
\draw  (6,0)-- (0,1);
\draw  (6,0)-- (0,-1);

\draw [fill=black] (0,1) circle (1.5pt);
\draw [fill=black] (0,-1) circle (1.5pt);
\draw [fill=black] (2,0) circle (1.5pt);
\draw [fill=black] (3,0) circle (1.5pt);
\draw [fill=black] (6,0) circle (1.5pt);
\draw [fill=black] (3.75,0) circle (1.5pt);
\draw [fill=black] (4.25,0) circle (1.5pt);
\draw [fill=black] (4.75,0) circle (1.5pt);

\draw (2.1,0.3) node {$1$};
\draw (3.1,0.3) node {$2$};
\draw (6.1,0.3) node {$k$};
\end{tikzpicture}
    \subcaption{Book $B_k$}
\end{subfigure}
\begin{subfigure}{0.4\textwidth}
    \centering
\begin{tikzpicture}
\draw  (0,1)-- (0,-1);
\draw  (-1.56,0)-- (0,1);
\draw  (-1.56,0)-- (0,-1);
\draw  (0,1)-- (1.64,0);
\draw  (1.64,0)-- (0,-1);

\draw [fill=black] (0,1) circle (1.5pt);
\draw [fill=black] (0,-1) circle (1.5pt);
\draw [fill=black] (-1.56,0) circle (1.5pt);
\draw [fill=black] (1.64,0) circle (1.5pt);
\end{tikzpicture}
    \subcaption{Diamond $B_2$}
\end{subfigure}    
\end{figure}

\begin{corollary}\label{cor:graph_classes}
Conjecture \ref{conj:BN_general} holds for the following graph classes:
\begin{enumerate}[$(i)$]
    \item Planar graphs with at least $405$ edges.
    \item Outerplanar graphs with at least $76$ edges.
    \item $B_k$-free graphs with at least $\big(\frac{10.06(k-1)\sqrt{k+1}}{3}\big)^{2}$ edges. In particular, the conjecture holds for diamond-free graphs with at least $34$ edges. 
    \item $C_k$-free graphs with at least $\big(\frac{10.06(k-3)\sqrt{k}}{3}\big)^{2}$ edges.
\end{enumerate}
\end{corollary}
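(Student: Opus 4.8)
The plan is to derive Corollary~\ref{cor:graph_classes} from Theorem~\ref{thm:main1} in three steps: (i) for each family, bound the number of triangles so as to place it inside some $\mathcal G_\omega(\varepsilon,c)$; (ii) dispose of the range $\omega\le 2$ by known results; and (iii) take $k=3$ in Theorem~\ref{thm:main1} and check one elementary inequality. For (ii): adding isolated vertices changes neither $m$, $\omega$, $n^+$, nor the nonzero part of the spectrum, so we may assume $G$ has no isolated vertex; if $\omega(G)\le 1$ then $G$ has no edges and nothing is to prove; and if $\omega(G)=2$ then $G$ is triangle-free, $\ell=\min\{n^+,2\}$, so Conjecture~\ref{conj:BN_general} reduces either to Nikiforov's inequality $\Lambda_1(G)\le 2(1-1/\omega)$ (when $n^+=1$) or to the triangle-free case of the Bollob\'as--Nikiforov conjecture \cite{lineigenvalues} (when $n^+\ge 2$). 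Hence it suffices to treat $\omega\ge 3$, which is exactly the regime of Theorem~\ref{thm:main1}.

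For (iii), Theorem~\ref{thm:main1} with $k=3$ gives $\Lambda_\ell(G)<2\big(\tfrac{\sqrt[3]{\omega}}{1+\sqrt[3]{\omega}}+\tfrac1{\omega^{3}}\big)$ as soon as $m$ is large enough, so it remains to verify
\[
\frac{\sqrt[3]{\omega}}{1+\sqrt[3]{\omega}}+\frac1{\omega^{3}}\ \le\ 1-\frac1\omega,\qquad\text{equivalently}\qquad \frac1\omega+\frac1{\omega^{3}}\ \le\ \frac1{1+\sqrt[3]{\omega}},
\]
for every integer $\omega\ge 3$. This is a routine one-variable estimate: the inequality is tightest at $\omega=3$, where $\tfrac{10}{27}<\tfrac1{1+\sqrt[3]{3}}$, and for larger $\omega$ the right-hand side decays only like $\omega^{-1/3}$ whereas the left-hand side decays like $\omega^{-1}$. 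This already yields the qualitative conclusion that Conjecture~\ref{conj:BN_general} holds for every member of $\mathcal G_\omega(\varepsilon,c)$ with sufficiently many edges; the explicit edge thresholds then require the triangle counts of step (i) together with the quantitative form of the argument.

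All four families lie in $\mathcal G(\tfrac12,\cdot)$: a planar graph has at most $3n-8$ triangles and in fact $t(G)\le m$ (with $\omega\le 4$); an outerplanar graph satisfies $t(G)\le\tfrac12 m$ (with $\omega\le 3$); a $B_k$-free graph has every edge in at most $k-1$ triangles, whence $3t(G)\le(k-1)m$, i.e. $t(G)\le\tfrac{k-1}{3}m$ (with $\omega\le k+1$, since $B_k\subseteq K_{k+2}$); and a $C_k$-free graph satisfies $t(G)\le\tfrac{k-3}{3}m$, a bound attained by $K_{k-1}$ (with $\omega\le k-1$). Specializing the proof of Theorem~\ref{thm:main1} to $\varepsilon=\tfrac12$ with the linear triangle bound $t(G)\le c\,m$ in hand — rather than inserting the general, far-from-tight threshold $\big(2.2c\,\omega^{2k}\big)^{1/\varepsilon}$ — yields the conclusion already for $m\ge(10.06\,c\sqrt{\omega})^2$, and substituting the constants above produces the thresholds $405$, $76$, $\big(\tfrac{10.06(k-1)\sqrt{k+1}}{3}\big)^2$ and $\big(\tfrac{10.06(k-3)\sqrt{k}}{3}\big)^2$, with $k=2$ recovering the diamond-free bound $34$ and $k=4$ the $C_4$-free bound $45$. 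The main obstacle is twofold: obtaining the triangle counts with sharp constants — especially the \emph{averaged} bound $t(G)\le\tfrac{k-3}{3}m$ for $C_k$-free graphs, the least standard of the four inputs, for which no per-edge bound holds (e.g.\ $B_3$ is $C_5$-free yet has an edge lying in three triangles) — and extracting the clean threshold $(10.06\,c\sqrt{\omega})^2$ by re-examining the estimates inside the proof of Theorem~\ref{thm:main1} instead of reading it off the stated form.
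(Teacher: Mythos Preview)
Your plan is the paper's plan: bound $t(G)$ linearly in $m$ to place each family in $\mathcal G_\omega(\tfrac12,c)$, then invoke not Theorem~\ref{thm:main1} itself but the sharper threshold $(10.06\,c\sqrt\omega)^{1/\varepsilon}$ obtained by re-running its proof with the cruder hypothesis $\delta>1-\tfrac1\omega$ in place of $\delta\ge \tfrac{\sqrt[3]\omega}{1+\sqrt[3]\omega}+\omega^{-k}$. The paper isolates exactly this computation as Remark~\ref{remark:edge_lower_bound}, so your detour through $k=3$ and the inequality $\tfrac1\omega+\tfrac1{\omega^3}\le \tfrac1{1+\sqrt[3]\omega}$ is unnecessary for the explicit thresholds.

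The one methodological difference worth noting is how the triangle bounds are obtained. The paper does all four uniformly via the identity $t(G)=\tfrac13\sum_v m(G[N(v)])$, reducing each case to a local extremal problem on $G[N(v)]$: outerplanar when $G$ is planar, acyclic when $G$ is outerplanar, $K_{1,k}$-free when $G$ is $B_k$-free, and $P_{k-1}$-free when $G$ is $C_k$-free. In the last case the Erd\H os--Gallai theorem gives $m(G[N(v)])\le \tfrac{k-3}{2}\deg(v)$, which after summing yields $t(G)\le \tfrac{k-3}{3}m$. This neighborhood argument is precisely what resolves the obstacle you correctly flag --- your per-edge reasoning for $B_k$-free indeed does not extend here, as your $B_3$ example shows, but the averaged neighborhood count does. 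Two minor points: your explicit disposal of $\omega\le 2$ fills a step the paper leaves implicit; and your observation $\omega\le k-1$ for $C_k$-free graphs is sharper than the paper's $\omega\le k$, though the corollary's stated threshold uses the latter.
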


In this note, we also observe that the bound in the Bollob\'{a}s-Nikiforov Conjecture can be improved for graphs with not so many triangles. The upper bound in Conjecture \ref{conj:BN} approaches 2 as $\omega \rightarrow \infty$. Our bound, given below, approaches 1.  

\begin{theorem}\label{thm:main2}
Let $G \in \mathcal{G}(\varepsilon, c)$. Then
     \[\Lambda_2(G) \leq 1 + o(1). \]
\end{theorem}

In conclusion, our results suggest that the bottleneck in proving Conjectures \ref{conj:BN} and \ref{conj:BN_general} is when the graph $G$ has many triangles, i.e. when $t(G)=\Omega(m^{1.5}).$ 

\section{Proof of Theorem \ref{thm:main1}}
 We first recall a consequence of Holder's inequality. For $x = (x_1,x_2,\dots,x_n) \in \mathbb{R}^n$ and $1 \leq p < \infty$, define $\Vert x \Vert_p = \Big(\sum_{i=1}^{n} \vert x_i \vert^p \Big)^\frac{1}{p}$. 
\begin{lemma}\label{p_norm_ineq}
Let $x$ be a vector in $\mathbb{R}^n$ and $1 \leq p\leq q$. Then
 \[ ||x||_q \leq ||x||_p \leq n^{\frac{1}{p} -\frac{1}{q}} ||x||_q.\]
\end{lemma}

Using the above lemma, we estimate the number of triangles in a graph in terms of its eigenvalues.
\begin{lemma}\label{lemma:triangle}
Let $t(G)$ denote the number of triangles in a graph $G$. Let $s_k=\sum_{i=1}^k \lambda_i^2$. Then, 
\[ 6t(G)\ge \frac{1}{\sqrt{k}} \big(s_k\big)^{3/2} -  \big(2m - s_k \big)^{3/2}, \]
for all $1\le k\le n^+$.
\end{lemma}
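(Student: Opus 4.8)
The starting point is the two standard trace identities
\[
6\,t(G) = \Tr\big(A(G)^3\big) = \sum_{i=1}^{n}\lambda_i^3, \qquad 2m = \Tr\big(A(G)^2\big) = \sum_{i=1}^{n}\lambda_i^2,
\]
so the claim is really a statement comparing $\sum_i \lambda_i^3$ with the partial sums of $\sum_i \lambda_i^2$. The plan is to split the cubic sum at index $k$, bound the head from below by a positive term and the tail from below by (minus) its absolute value, and in both cases pass between the $\ell^3$ and $\ell^2$ norms using Lemma~\ref{p_norm_ineq} with $p=2$ and $q=3$.

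For the head $\sum_{i=1}^{k}\lambda_i^3$: since $k \le n^+$, the eigenvalues $\lambda_1 \ge \dots \ge \lambda_k > 0$ are positive, so applying Lemma~\ref{p_norm_ineq} to the vector $(\lambda_1,\dots,\lambda_k) \in \mathbb{R}^k$ gives $\|(\lambda_1,\dots,\lambda_k)\|_2 \le k^{\frac12-\frac13}\,\|(\lambda_1,\dots,\lambda_k)\|_3 = k^{1/6}\,\|(\lambda_1,\dots,\lambda_k)\|_3$. Cubing and rearranging,
\[
\sum_{i=1}^{k}\lambda_i^3 = \|(\lambda_1,\dots,\lambda_k)\|_3^3 \ \ge\ \frac{1}{\sqrt{k}}\Big(\sum_{i=1}^{k}\lambda_i^2\Big)^{3/2} = \frac{1}{\sqrt{k}}\,(s_k)^{3/2}.
\]
For the tail, I would use $\sum_{i=k+1}^{n}\lambda_i^3 \ge -\sum_{i=k+1}^{n}|\lambda_i|^3$, and then the other inequality in Lemma~\ref{p_norm_ineq} (the case $\|x\|_q \le \|x\|_p$ with $p=2$, $q=3$) applied to $(|\lambda_{k+1}|,\dots,|\lambda_n|)$, which yields $\sum_{i=k+1}^{n}|\lambda_i|^3 \le \big(\sum_{i=k+1}^{n}\lambda_i^2\big)^{3/2} = (2m - s_k)^{3/2}$, using $\sum_{i=1}^{n}\lambda_i^2 = 2m$. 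Hence $\sum_{i=k+1}^{n}\lambda_i^3 \ge -(2m - s_k)^{3/2}$.

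Adding the two bounds gives
\[
6\,t(G) = \sum_{i=1}^{k}\lambda_i^3 + \sum_{i=k+1}^{n}\lambda_i^3 \ \ge\ \frac{1}{\sqrt{k}}\,(s_k)^{3/2} - (2m - s_k)^{3/2},
\]
which is exactly the desired inequality. I do not anticipate a real obstacle here; the only points requiring care are (a) invoking $k \le n^+$ so that the head eigenvalues are genuinely nonnegative before applying the norm comparison, and (b) keeping the exponents straight when converting $\|\cdot\|_2 \le k^{1/p-1/q}\|\cdot\|_3$ into the $\tfrac{1}{\sqrt k}$ factor after cubing. Everything else is a direct application of Lemma~\ref{p_norm_ineq} together with the trace identities.
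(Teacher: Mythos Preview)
Your argument is correct and matches the paper's proof essentially step for step: both split $\sum_i\lambda_i^3$ into a head and a tail, use Lemma~\ref{p_norm_ineq} with $p=2$, $q=3$ on $(\lambda_1,\dots,\lambda_k)$ to get the $\tfrac{1}{\sqrt{k}}(s_k)^{3/2}$ lower bound, and use $\|x\|_3\le\|x\|_2$ on the remaining eigenvalues together with $\sum_i\lambda_i^2=2m$ to control the tail by $(2m-s_k)^{3/2}$. The only cosmetic difference is that the paper restricts the tail bound to the negative eigenvalues (dropping the nonnegative ones in the range $k+1,\dots,n^+$ first), whereas you keep the full tail and pass through $-\sum_{i>k}|\lambda_i|^3$; both routes land on the same inequality.
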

\begin{proof}
Consider the vectors $x = (\lambda_1^2, \ldots, \lambda_{k}^2)$ and $y = (\lambda_n^2, \lambda_{n-1}^2,\dots, \lambda^2_{n-n^{-}+1})$. By Lemma \ref{p_norm_ineq} with $p=2$ and $q=3$, we obtain that
\begin{equation*}
     \sum_{i=1}^{k}\lambda_i^3  \geq \frac{1}{\sqrt{k}} \big(s_k\big)^{3/2},
\end{equation*}
and
\begin{equation*}
     \sum_{i = n-n^{-} +1}^n |\lambda_i|^3 \leq \bigg( \sum_{i = n-n^{-} +1}^n \lambda_i^2 \bigg)^{3/2} \leq \big(2m - s_k \big)^{3/2}. 
\end{equation*}
It follows that 
\[ 6t(G) = \sum_{i=1}^n \lambda_i^3 \geq \sum_{i=1}^{k}\lambda_i^3 - \sum_{i = n-n^- +1}^n |\lambda_i|^3 \ge \frac{1}{\sqrt{k}} \big(s_k\big)^{3/2} -  \big(2m - s_k\big)^{3/2}. 
\qedhere
\]
\end{proof}

We are ready to prove Theorem \ref{thm:main1}. Our proof is via contradiction. It relies on the fact that if the square sum of eigenvalues is too large, then the number of triangles in the graph has to be $\Omega(m^{1.5})$, contrary to what we assumed. 

\begin{proof}[Proof of Theorem \ref{thm:main1}]
Let $\delta=\dfrac{\Lambda_{\ell}}{2}$. Then, by Lemma \ref{lemma:triangle}, 
\begin{gather*}
6cm^{1.5-\varepsilon} \ge 6t(G) \ge \frac{1}{\sqrt{\ell}}  \big(s_\ell \big)^{3/2} - \big(2m - s_\ell \big)^{3/2} 
 \geq (2m)^{3/2}\bigg(\frac{\delta^{3/2}}{\sqrt{\omega}} - \big(1 - \delta\big)^{3/2}\bigg),
\end{gather*}
since $\ell \le \omega$.  Suppose that Theorem \ref{thm:main1} does not hold. Then $\delta \geq \big(\frac{\sqrt[3]{\omega}}{1+ \sqrt[3]{\omega}} + \frac{1}{\omega^k}\big)$, and we have
\begin{align*}
  6cm^{1.5-\varepsilon}  & \geq (2m)^{3/2}\bigg[\frac{1}{\sqrt{\omega}}\bigg(\frac{\sqrt[3]{\omega}}{1+ \sqrt[3]{\omega}} + \frac{1}{\omega^k}\bigg)^{3/2} - \bigg(\frac{1}{1+ \sqrt[3]{\omega}} - \frac{1}{\omega^k}\bigg)^{3/2}\bigg] \\
  & > (2m)^{3/2} \bigg[\frac{1}{\sqrt{\omega}}\bigg(\frac{\sqrt[3]{\omega}}{1+ \sqrt[3]{\omega}}\bigg)^{3/2}+ \frac{1}{\sqrt{\omega}}\bigg( \frac{1}{\omega^k}\bigg)^{3/2} - \bigg(\frac{1}{1+ \sqrt[3]{\omega}}\bigg)^{3/2}\bigg] \\
& > (2m)^{3/2} \frac{1}{\omega^{1.5k + 0.5}}.
\end{align*}
The above inequalities hold since $\omega\ge 3$ and $\omega^k\ge \omega \ge \sqrt[3]{\omega} +1$. This gives us $m < (2.2c\omega^{2k})^{1/\varepsilon}$ which is a contradiction. 
\end{proof}

\begin{remark}\label{remark:edge_lower_bound}
In the proof above, if $\delta > (1-\frac{1}{\omega})$ then
\begin{align*}
  6cm^{1.5-\varepsilon}  & > (2m)^{3/2}\bigg(\frac{1}{\sqrt{\omega}}\bigg(1-\frac{1}{\omega}\bigg)^{3/2} - \bigg(\frac{1}{\omega}\bigg)^{3/2}\bigg) \\
& = \frac{(2m)^{3/2}}{\sqrt{\omega}}\bigg(\bigg(1-\frac{1}{\omega}\bigg)^{3/2} - \frac{1}{\omega}\bigg) \\  
& > \frac{(2m)^{3/2}}{3\sqrt{3}\sqrt{\omega}}\big(2\sqrt{2} - \sqrt{3}\big).
\end{align*}
The function $\big(1-\frac{1}{\omega}\big)^{3/2} - \frac{1}{\omega}$ is increasing in $\omega$, so the last inequality follows by taking $\omega=3$. This gives us $m < (10.06c\sqrt{\omega})^{1/\varepsilon}$. We conclude that if $G\in \mathcal{G}_\omega(\varepsilon, c)$ has $m\ge (10.06c\sqrt{\omega})^{1/\varepsilon}$ edges then Conjecture \ref{conj:BN_general} holds for $G$. 
\end{remark}

We use the above remark to prove Corollary \ref{cor:graph_classes}. We require the following lemma, which is not difficult to prove.
\begin{lemma}\label{lemma:triangle_counting} For every graph $G$, 
\[t(G) =  \frac{1}{3}\sum_{v\in V(G)} m(G[N(v)]),\]
where $m(G[N(v)])$ denotes the number of edges in the subgraph induced by $N(v)$.
\end{lemma}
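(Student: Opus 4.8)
The plan is to prove the identity by a standard double-counting argument, counting in two ways the set
\[
\mathcal{S}=\{(v,e) : v\in V(G),\ e\in E(G[N(v)])\},
\]
that is, pairs consisting of a vertex $v$ together with an edge of the subgraph induced on its neighborhood. On the one hand, summing over vertices first gives $|\mathcal{S}|=\sum_{v\in V(G)} m(G[N(v)])$, which is the right-hand side of the claimed identity multiplied by $3$.

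On the other hand, I would count $|\mathcal{S}|$ by grouping the pairs according to which triangle they determine. If $(v,e)\in\mathcal{S}$ with $e=\{x,y\}$, then $x,y\in N(v)$ and $xy\in E(G)$, so $vx, vy, xy$ are all edges and $\{v,x,y\}$ spans a triangle $T$ of $G$; thus every pair in $\mathcal{S}$ determines a triangle. Conversely, a fixed triangle $T=\{a,b,c\}$ arises from exactly three pairs in $\mathcal{S}$, namely $(a,\{b,c\})$, $(b,\{a,c\})$, and $(c,\{a,b\})$, since for each choice of apex vertex the opposite edge lies in the neighborhood of that apex, and no other pair maps to $T$. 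Hence $|\mathcal{S}|=3\,t(G)$.

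Combining the two counts yields $3\,t(G)=\sum_{v\in V(G)} m(G[N(v)])$, which is the statement. I do not expect any real obstacle here: the only thing to be careful about is checking that the correspondence between $\mathcal{S}$ and (triangle, apex) pairs is exactly three-to-one — in particular that distinct pairs $(v,e)$ with the same underlying triangle are genuinely distinct and that every incidence indeed comes from a triangle (which uses that $G$ is simple, so $e$ is a genuine edge on two distinct vertices both adjacent to $v$). This is immediate, so the proof is short.
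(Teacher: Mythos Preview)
Your double-counting argument is correct and is exactly the routine verification the paper has in mind; the paper itself omits the proof, merely remarking that the lemma ``is not difficult to prove.'' Nothing more is needed.
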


We finish this section with a proof of Corollary \ref{cor:graph_classes}.
\begin{proof}[Proof of Corollary \ref{cor:graph_classes}] The following claims are well known. We give the proofs for completeness using Lemma \ref{lemma:triangle_counting}. 
\begin{enumerate}[$(i)$]
    \item If $G$ is a planar graph then for every vertex $v \in V(G)$, the induced subgraph $G[N(v)]$ is outerplanar and has at most $2\deg(v)-3$ edges. This gives $t(G)\le \frac{4m-3n}{3}$ and since $m\le 3n-6$ it follows that $t(G) \leq m-2$. As planar graphs are $K_5$-free, $\omega(G)\le 4$. 
    \item If $G$ is an outerplanar graph then for every vertex $v \in V(G)$, the induced subgraph $G[N(v)]$ is acyclic and has at most $\deg(v)-1$ edges. This gives $t(G)\le \frac{2m-n}{3}$. Since $m\le 2n-3$, it follows that $t(G) \leq \frac{m-1}{2}$. As outerplanar graphs are $K_4$-free, $\omega(G)\le 3$. 
    \item If $G$ is a $B_k$-free graph then for every vertex $v \in V(G)$, the induced subgraph $G[N(v)]$ is $K_{1,k}$-free and has at most $\deg(v)\frac{(k-1)}{2}$ edges. Thus $t(G)\le \frac{(k-1)m}{3}$ and clearly $\omega(G) \leq k+1$.
    \item If $G$ is a $C_k$-free graph then for every vertex $v \in V(G)$, the induced subgraph $G[N(v)]$ is $P_{k-1}$-free and by Erd\"{o}s-Gallai Theorem \cite{erdos_gallai_1959}, it  has at most $\deg(v)\frac{(k-3)}{2}$ edges. Thus $t(G)\le \frac{(k-3)m}{3}$ and clearly $\omega(G) \leq k$.
\end{enumerate}
Using the above claims and Remark \ref{remark:edge_lower_bound}, the Corollary \ref{cor:graph_classes} is immediate. 
\end{proof}

\section{Proof of Theorem \ref{thm:main2}}
In \cite{lineigenvalues}, Li, Ning and Wu proved Conjecture \ref{conj:BN} for triangle-free graphs and characterized the graphs which attain equality. In \cite[Remark 4.1]{signedspecturan2023}, Kannan and Pragada gave an upper bound on spectral radius in terms of the number of triangles in the graph. In the following theorem, we obtain a bound for $s_2 =\lambda_1^2 + \lambda_2^2$ in terms of number of triangles in the graph. This generalizes the above-mentioned results. 

\begin{theorem}\label{thm:triangle_bound}
For every graph $G$ with $m\ge 2$ edges, 
\[\lambda_1^2 + \lambda_2^2 \leq m + \big(3t(G)\big)^{2/3}\]
and the inequality is strict if $t(G) > 0 $.
\end{theorem}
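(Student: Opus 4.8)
The plan is to argue by contradiction and reduce everything to one analytic inequality. Two ranges are trivial: if $\lambda_1^2+\lambda_2^2\le m$ we are done, and if $(3t)^{2/3}\ge m$ then $m+(3t)^{2/3}\ge 2m\ge \lambda_1^2+\lambda_2^2$. So I may assume
\[ m < s := \lambda_1^2+\lambda_2^2 \le 2m, \qquad 3t < m^{3/2}, \]
and it suffices to prove $6t \ge 2(s-m)^{3/2}$, strictly when $t>0$.

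Next I would split on the sign of $\lambda_2$. If $\lambda_2=0$ then $s=\lambda_1^2$, and Lemma~\ref{lemma:triangle} with $k=1$ gives $6t\ge \lambda_1^3-(2m-\lambda_1^2)^{3/2}$; after scaling (put $x=\lambda_1^2/m\in(1,2]$) the target becomes the one-variable inequality $x^{3/2}-(2-x)^{3/2}\ge 2(x-1)^{3/2}$, which is elementary — both sides vanish at $x=1$, a derivative comparison at $x=1$ together with the value at $x=2$ settles it, and it is strict on $(1,2]$. If $\lambda_2<0$ then $A(G)$ has one positive and no zero eigenvalue, which forces $G=K_n$, and the inequality is then a direct computation with $\lambda_1=n-1,\ \lambda_2=-1$. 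When $t=0$ the claim is $s\le m$, the known bound for triangle-free graphs; so I may assume $t\ge1$. The substantive case is therefore $\lambda_2>0$ (i.e.\ $n^+\ge2$).

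In that case I would use $\sum\lambda_i^2=2m$, $\sum\lambda_i^3=6t$ and peel off the top two eigenvalues:
\[ 6t=\lambda_1^3+\lambda_2^3+\sum_{i\ge3}\lambda_i^3 \ \ge\ \lambda_1^3+\lambda_2^3-\sum_{\lambda_i<0}|\lambda_i|^3. \]
The negative tail is bounded by Lemma~\ref{p_norm_ineq} via $\sum_{\lambda_i<0}|\lambda_i|^3\le\big(\sum_{\lambda_i<0}\lambda_i^2\big)^{3/2}\le (2m-s)^{3/2}$, and, since $|\lambda_i|\le\lambda_1$ for every $i$, also via $\sum_{\lambda_i<0}|\lambda_i|^3\le \lambda_1(2m-s)$; one keeps whichever is smaller. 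Together with $\lambda_1^3+\lambda_2^3\ge \tfrac1{\sqrt2}s^{3/2}$ (Lemma~\ref{p_norm_ineq} again), and writing $A=\lambda_1^2\ge B=\lambda_2^2\ge0$ with $A+B=s$, the whole problem collapses to a purely analytic inequality in $A,B,m$ on the range $A\ge B\ge0$, $m<A+B\le 2m$.

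The crux — the step I expect to be the real fight — is exactly this analytic inequality, and it is delicate for a structural reason: Theorem~\ref{thm:triangle_bound} is asymptotically tight in the ``disconnected'' regime (for $G=2K_n$ one has $s-m\sim(3t)^{2/3}$ as $n\to\infty$), and there the estimate $\lambda_1^3+\lambda_2^3\ge\tfrac1{\sqrt2}s^{3/2}$ is itself tight ($\lambda_1=\lambda_2$) while the negative-tail bound is used at full strength, so no crude step can have slack to spare. Closing that regime seems to need extra input: either keeping the discarded positive contribution $\sum_{i\ge3,\lambda_i>0}\lambda_i^3\ge0$ together with the trace identity $\sum_{i\ge3}\lambda_i=-(\lambda_1+\lambda_2)$, or passing to connected components and using that a connected graph whose spectral radius nearly attains $\sqrt{2m}$ carries $\Omega(m^{3/2})$ triangles. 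Once the analytic inequality is proved, strictness for $t>0$ follows from the equality case of Lemma~\ref{p_norm_ineq}: equality in $\|x\|_3=\|x\|_2$ forces a single nonzero entry, and one checks no such configuration of squared eigenvalues is compatible with both $s>m$ and $t(G)>0$.
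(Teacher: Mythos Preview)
Your reduction is clean, but the ``purely analytic inequality in $A,B,m$'' that you say the problem collapses to is not proved --- and in fact it is \emph{false} on the stated range. Take $m=1$, $A=\lambda_1^2=0.9$, $B=\lambda_2^2=0.11$ (so $s=1.01$). Your two tail bounds are $(2m-s)^{3/2}\approx 0.985$ and $\sqrt{A}\,(2m-s)\approx 0.939$, while $A^{3/2}+B^{3/2}\approx 0.890$; even taking the smaller tail bound leaves $A^{3/2}+B^{3/2}-0.939\approx -0.049<0<0.002=2(s-m)^{3/2}$. So no combination of the separate estimates you list can handle $s$ slightly above $m$, and the ``extra input'' you speculate about (trace of the tail, component decomposition) is never actually supplied. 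As written, the proposal does not prove the theorem.

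The paper closes this gap in a single stroke by \emph{not} decoupling the positive and negative parts. With $\delta:=s-m$, the identity $\sum_i\lambda_i^2=2m$ gives $\lambda_1^2+\lambda_2^2=\sum_{i\ge 3}\lambda_i^2+2\delta\ge \sum_{\lambda_i<0}\lambda_i^2+2\delta$, so the vector $x=(\lambda_1^2,\lambda_2^2,0,\dots,0)$ weakly majorizes $y=(\lambda_n^2,\dots,\lambda_{n-n^-+1}^2,\delta,\delta)$. Applying $\|y\|_p\le\|x\|_p$ with $p=\tfrac32$ yields directly
\[
\lambda_1^3+\lambda_2^3 \ \ge\ \sum_{\lambda_i<0}|\lambda_i|^3 + 2\delta^{3/2},
\]
whence $6t(G)\ge 2\delta^{3/2}$; strictness follows because $y$ has at least three nonzero entries so $x\ne y$. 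The key idea you are missing is to pad the negative block with the two copies of $\delta$ and compare it \emph{as a vector} to $(\lambda_1^2,\lambda_2^2)$: this retains exactly the coupling between the two sides that your separate bounds on $\lambda_1^3+\lambda_2^3$ and on the negative tail throw away.
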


The proof of the above theorem relies on majorization. To that end, we first recall some definitions and results. For a vector $x \in \mathbb{R}^n$, let $x^\downarrow$ denote the vector obtained by rearranging the entries of $x$ in the non-increasing order. Given two vectors $x, y \in \mathbb{R}^n $, we say that  $x$ is \textit{weakly majorized} by $y$, denoted by $x \prec_w y$, if
\begin{gather*}
    \sum_{j=1}^{k} {x_j}^\downarrow \leq \sum_{j=1}^{k} {y_j}^\downarrow
\end{gather*}
for all $1 \leq k \leq n.$
If $x \prec_w y \ \text{and}  \ \sum_{i=1}^{n} x_i^\downarrow = \sum_{i=1}^{n} y_i^\downarrow,$
then we say that $x$ is \textit{majorized} by $y$ and denoted by $x \prec y$.

\begin{theorem}[{\cite[Theorem 2.1]{lineigenvalues}}]\label{rev_majorization_thm}
    Let $x = (x_1,x_2,\dots,x_n),\ y = (y_1,y_2,\dots,y_n) \in \mathbb{R}^n_{\geq 0}$, such that $x_i$ and $y_i$ are in non-increasing order. If $y \prec_w x$, then 
    \[\Vert y\Vert_p \leq \Vert x\Vert_p\]
    for any real number $p> 1$, and equality holds if and only if $x=y$.
\end{theorem}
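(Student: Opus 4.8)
The final statement to prove is Theorem~\ref{thm:triangle_bound}: for every graph $G$ with $m \ge 2$ edges, $\lambda_1^2 + \lambda_2^2 \le m + (3t(G))^{2/3}$, with strict inequality when $t(G) > 0$. The plan is to compare the vector of the two largest squared eigenvalues against a cleverly chosen comparison vector built from the first three power sums of the eigenvalues, and then to invoke the majorization inequality of Theorem~\ref{rev_majorization_thm}. The three quantities we have control over are $\sum_i \lambda_i = 0$ (trace), $\sum_i \lambda_i^2 = 2m$ (edge count), and $\sum_i \lambda_i^3 = 6t(G)$ (triangle count). The whole game is to package these into a bound on just $\lambda_1^2 + \lambda_2^2$.

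\textbf{Setting up the majorization.} First I would consider the nonnegative vector $x = (\lambda_1^2, \lambda_2^2)$ (padding with zeros to a common length if needed) and seek a comparison vector $y$, also nonnegative and nonincreasing, with $x \prec_w y$, so that Theorem~\ref{rev_majorization_thm} applied with an appropriate exponent $p > 1$ converts a bound on $\|x\|_p$ into the desired statement. Concretely, since $\lambda_1 + \lambda_2 \ge \lambda_1 \ge 0$ and the sum of all eigenvalues is zero, one controls $\lambda_1^2 + \lambda_2^2$ using $\lambda_1^3 + \lambda_2^3 \le \sum_{\lambda_i > 0} \lambda_i^3 \le 6t(G) + \sum_{\lambda_i < 0}|\lambda_i|^3$; but more directly, I would try to show that the vector $(\lambda_1^3, \lambda_2^3)$ is weakly majorized by a vector whose entries are governed by $6t(G)$, and then pass between the $\ell^{3/2}$-norm of $(\lambda_1^2, \lambda_2^2)$ (which equals $(\lambda_1^3 + \lambda_2^3)^{2/3}$) and the $\ell^1$-sum $\lambda_1^2 + \lambda_2^2$. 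The natural exponent is $p = 3/2$, since $\|(\lambda_1^2,\lambda_2^2)\|_{3/2}^{3/2} = \lambda_1^3 + \lambda_2^3$ connects the second and third power sums.

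\textbf{The key estimate.} The heart of the argument is to bound $\lambda_1^2 + \lambda_2^2$ by $m + (3t(G))^{2/3}$ using the constraints $\lambda_1^2 + \lambda_2^2 + \sum_{i \ge 3}\lambda_i^2 = 2m$ and $\lambda_1^3 + \lambda_2^3 \le 6t(G) - \sum_{i \ge 3}\lambda_i^3$. I would isolate the contribution of $\lambda_1, \lambda_2$ from the remaining eigenvalues and use the elementary fact that for the two largest (nonnegative) eigenvalues, $(\lambda_1^3 + \lambda_2^3)^{2/3} \ge \tfrac{1}{2^{1/3}}(\lambda_1^2 + \lambda_2^2)$ fails in the wrong direction, so instead the comparison must be arranged so that $\|\cdot\|_{3/2} \le \|\cdot\|_1$ is used together with the cube-root of $3t(G)$. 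The cleanest route is: set $s_2 = \lambda_1^2 + \lambda_2^2$ and write $s_2 = m + x$ where $x = s_2 - m$; then show $x \le (3t(G))^{2/3}$ by exhibiting a weak majorization that feeds $3t(G) = \tfrac{1}{2}\cdot 6t(G)$ through Theorem~\ref{rev_majorization_thm}.

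\textbf{Main obstacle and strictness.} The principal difficulty is choosing the comparison vector $y$ so that the weak-majorization hypothesis $x \prec_w y$ holds unconditionally for \emph{every} graph — this requires carefully accounting for the negative eigenvalues, since $\sum_i \lambda_i^3 = 6t(G)$ mixes positive and negative cubes and the negative part can only help. I expect the bookkeeping that separates $\{\lambda_i > 0\}$ from $\{\lambda_i < 0\}$ and certifies the partial-sum inequalities to be the delicate step. Once the weak majorization is in place, Theorem~\ref{rev_majorization_thm} gives the norm inequality, and its equality clause — equality holds if and only if the two vectors coincide — is exactly what yields the strictness claim: when $t(G) > 0$ the comparison vector cannot equal $x$, forcing the inequality to be strict. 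I would close by verifying the degenerate boundary case $t(G) = 0$ separately, where the bound reduces to $\lambda_1^2 + \lambda_2^2 \le m$, recovering the triangle-free result of Lin, Ning and Wu.
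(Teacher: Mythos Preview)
There is a mismatch to flag first: the statement labeled above is Theorem~\ref{rev_majorization_thm}, which the paper does \emph{not} prove --- it is quoted from \cite{lineigenvalues} as a black-box tool. Your proposal is not a proof of that majorization inequality at all; it is a sketch of how to derive Theorem~\ref{thm:triangle_bound} \emph{using} Theorem~\ref{rev_majorization_thm}. Assuming Theorem~\ref{thm:triangle_bound} was the intended target, here is the comparison.

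Your high-level plan matches the paper's: set $\delta = \lambda_1^2 + \lambda_2^2 - m$, assume $\delta > 0$, apply Theorem~\ref{rev_majorization_thm} with $p = 3/2$ to link the second and third power sums, and read off strictness from the equality clause. That much is right.

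But there is a genuine gap: you never write down the comparison vector, and at one point you have the majorization running the wrong way. You propose to find $y$ with $x \prec_w y$, where $x = (\lambda_1^2, \lambda_2^2, 0, \ldots)$; that would yield $\|x\|_p \le \|y\|_p$, which bounds the wrong quantity. The paper instead constructs
\[
y = \bigl(\lambda_n^2,\ \lambda_{n-1}^2,\ \ldots,\ \lambda_{n-n^-+1}^2,\ \delta,\ \delta\bigr) \in \mathbb{R}^{n^-+2}
\]
and verifies $y \prec_w x$. The two copies of $\delta$ are the whole trick: the identity $\lambda_1^2 + \lambda_2^2 = \sum_{i\ge 3}\lambda_i^2 + 2\delta$ makes the weak-majorization partial sums work, and after taking $\|\cdot\|_{3/2}^{3/2}$ they contribute exactly $2\delta^{3/2}$, giving
\[
\lambda_1^3 + \lambda_2^3 > \sum_{\lambda_i < 0}|\lambda_i|^3 + 2\delta^{3/2},
\]
from which $6t(G) > 2\delta^{3/2}$ follows immediately. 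Your paragraph ``Main obstacle and strictness'' correctly names this bookkeeping as the delicate step but does not resolve it, and the aside that $(\lambda_1^3 + \lambda_2^3)^{2/3} \ge 2^{-1/3}(\lambda_1^2 + \lambda_2^2)$ ``fails in the wrong direction'' is a symptom of not yet having the right $y$ in hand.
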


We now give a proof of Theorem \ref{thm:triangle_bound}.
\begin{proof}[Proof of Theorem \ref{thm:triangle_bound}]
The statement is true for complete graphs $K_n$ for $n\ge 3$. So assume $G\neq K_n$ and hence $\lambda_2\ge 0$. Now if $\lambda_1^2 + \lambda_2^2 \leq m$, then we are done. So suppose $\lambda_1^2 + \lambda_2^2 > m$. Let $\lambda_1^2 + \lambda_2^2 = m + \delta$, for some $\delta>0$.  Since $\sum_{i=1}^n \lambda_i^2 = 2m$, we have
\[ 2(\lambda_1^2 + \lambda_2^2) =  2m + 2\delta = \sum_{i=1}^n \lambda_i^2 + 2\delta \]
which implies
\begin{equation}\label{eq:1}
\lambda_1^2 + \lambda_2^2 = \sum_{i=3}^n \lambda_i^2 + 2\delta \geq \sum_{i=n-n^{-}+1}^n\lambda_i^2 + 2\delta.
\end{equation}

Take $x = (\lambda_1^2, \lambda_2^2,0,\dots,0)^T$ and $y = (\lambda_n^2, \lambda_{n-1}^2,\dots, \lambda^2_{n-n^{-}+1},\delta,\delta)^T$ in $\mathbb{R}^{n^{-}+2}$. We claim that $y \prec_w x$. Clearly, $\lambda_1^2\ge \max\{\lambda_n^2, \delta\}$ by \eqref{eq:1}. Again by \eqref{eq:1}, $\lambda_1^2 + \lambda_2^2 \geq \sum_{i=n-n^{-}+1}^n\lambda_i^2 + 2\delta$. Thus $y \prec_w x$. Moreover, $x \neq y$ because $y$ has at least three non-zero entries. 

Using Theorem \ref{rev_majorization_thm} with $p=\frac{3}{2}$, we get
$$\Vert x \Vert^{3/2}_{3/2} > \Vert y \Vert^{3/2}_{3/2}, $$ that is,
$$ \lambda_1^3 +  \lambda_2^3 > \sum_{i = n-n^-+1}^n|\lambda_i|^3 + 2\delta^{3/2}.$$
This implies that
\begin{align*}
    6t(G) &= \sum_{i=1}^n \lambda_i^3 \geq \lambda_1^3 + \lambda_2^3 - \sum_{i = n-n^-+1}^n|\lambda_i|^3\\ &> 2\delta^{3/2} = 2\big(\lambda_1^2 + \lambda_2^2- m\big)^{3/2}.
\end{align*}
Upon rearrangement we get the desired inequality.
\end{proof}

Now if $G\in \mathcal{G}(\varepsilon, c)$ then $t(G)\le cm^{1.5-\varepsilon}$. By Theorem \ref{thm:triangle_bound} we get 
\[\lambda_1^2 + \lambda_2^2 \leq m + \big(3cm^{1.5-\varepsilon}\big)^{2/3}.\] Theorem \ref{thm:main2} is now immediate.

\section*{Acknowledgement}
The authors thank Prof. Bojan Mohar for his helpful comments.

	\addcontentsline{toc}{section}{\textsf{References}}
	\bibliographystyle{plain}
	\bibliography{references}

\vspace{2cm} 
Hitesh Kumar\\
Email: {\tt hitesh\_kumar@sfu.ca}

Shivaramakrishna Pragada\\
Email: {\tt shivaramakrishna\_pragada@sfu.ca}

\textsc{Department of Mathematics, Simon Fraser University, Burnaby, BC \ V5A 1S6, Canada}

\end{document}